\newtheorem{thm}{Theorem}
\newtheorem{claim}{Claim}
\newcommand{\thistheoremname}{}
\newtheorem*{genericthm*}{\thistheoremname}
\newenvironment{namedthm*}[1]
{\renewcommand{\thistheoremname}{#1}%
	\begin{genericthm*}}
	{\end{genericthm*}}
\title{A note on the maximum number of triangles in a $C_5$-free graph}
\author
{
Beka Ergemlidze
\thanks{ Department of Mathematics, Central European University, Budapest.
		E-mail: \texttt{beka.ergemlidze@gmail.com}} \qquad 
Ervin Gy\H{o}ri \thanks{R\'enyi Institute, Hungarian Academy of Sciences and 
	Department of Mathematics, Central European University, Budapest. E-mail: \texttt{gyori.ervin@renyi.mta.hu}} \qquad 
Abhishek Methuku \thanks{Department of Mathematics, Central European University, Budapest. (Corresponding) E-mail: \texttt{abhishekmethuku@gmail.com}} \qquad
Nika Salia \thanks{Department of Mathematics, Central European University, Budapest. E-mail: \texttt{Nika\char`_Salia@phd.ceu.edu}}
}
\begin{document}

\maketitle

\begin{abstract}
We prove that the maximum number of triangles in a $C_5$-free graph on $n$ vertices is at most $\frac{1}{2 \sqrt 2} (1 + o(1)) n^{3/2}$, improving an estimate of Alon and Shikhelman \cite{AlonS}.
\end{abstract}

\section{Introduction}
Motivated by a conjecture of Erd\H os  \cite{Erd} on the number of pentagons in triangle-free graphs, Bollob\'as and Gy\H ori \cite{BolGy} initiated the study of the converse of this problem. Let $ex(n,K_3,C_5)$ denote the maximum possible number of triangles in a graph on $n$ vertices without containing a cycle of length five as a subgraph. Bollob\'as and Gy\H ori \cite{BolGy} showed that 

\begin{equation}
\label{eq:BGY}
\frac{1}{3 \sqrt 3} (1 + o(1)) n^{3/2} \le ex(n,K_3,C_5) \le \frac{5}{4} (1 + o(1)) n^{3/2}.
\end{equation}

Their lower bound comes from the following example: Take a $C_4$-free bipartite graph $G_0$ on $n/3 + n/3$ vertices with about $(n/3)^{3/2}$ edges and double each vertex in one of the color classes and add an edge joining the old and the new copy to produce a graph $G$. Then, it is easy to check that $G$ contains no $C_5$ and the number of triangles in $G$ is the same as the number of edges in $G_0$.

Recently, Alon and Shikhelman \cite{AlonS} improved the above result by showing that
\begin{equation}
\label{eq:AS}
ex(n,K_3,C_5) \le \frac{\sqrt 3}{2} (1 + o(1)) n^{3/2}.
\end{equation}

In fact, in their nice paper, they investigate the more general function $ex(n,T,H)$ which stands for the maximum possible number of copies of $T$ in a $H$-free graph on $n$ vertices. 

\vspace{3mm}

In this note we improve \eqref{eq:BGY} and \eqref{eq:AS} by showing that,

\begin{thm}
	\label{Main_Result}
\begin{displaymath}
ex(n,K_3,C_5) \le \frac{1}{2 \sqrt 2} (1 + o(1)) n^{3/2}.
\end{displaymath}
\end{thm}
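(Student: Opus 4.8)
The plan is to analyze the local structure around each vertex of a $C_5$-free graph $G$ on $n$ vertices, counting triangles via edges. Write $N(v)$ for the neighborhood of $v$, and for each edge $uv$ let $d(uv)$ denote the number of triangles containing $uv$ (equivalently $|N(u)\cap N(v)|$). Then the number of triangles is $\frac{1}{3}\sum_{uv\in E(G)} d(uv)$. The key $C_5$-free constraint to exploit is: if $uv$ is an edge lying in many triangles, then the common neighborhood $N(u)\cap N(v)$ is an independent set (a single edge inside it would close a $C_5$... actually one must be slightly careful — a chord $xy$ with $x,y\in N(u)\cap N(v)$ gives a $C_4$, not a $C_5$; the genuine obstruction is that two vertices $x,y$ in $N(u)\cap N(v)$ cannot have a common neighbor outside $\{u,v\}$, since $u x w y v u$ would be a $C_5$). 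So I would first establish: \emph{for every edge $uv$, the set $N(u)\cap N(v)$ spans very few edges, and moreover the vertices of $N(u)\cap N(v)$ have almost no neighbors outside $\{u,v\}$ in common with each other.}

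Next I would split the vertex set according to degree, fixing a threshold around $\sqrt{n}$. Vertices of degree $O(\sqrt n)$ contribute $O(n^{3/2})$ triangles trivially through edge-counting, but with the wrong constant, so the heart of the argument is the high-degree vertices. For a high-degree vertex $v$, consider the graph $G_v$ induced on $N(v)$; every triangle through $v$ corresponds to an edge of $G_v$, so I need to bound $e(G_v)$ and then sum. The $C_5$-freeness of $G$ forces strong restrictions on $G_v$: for instance $G_v$ cannot contain a path on four vertices whose endpoints are both adjacent to a common vertex outside $N(v)$, and combined with the earlier observation one gets that $G_v$ is essentially a union of structures that are ``$C_4$-free from the outside.'' I expect the right bound to come from a Kővári–Sós–Turán / Cauchy–Schwarz style double count: summing $|N(u)\cap N(w)|$ over pairs $u,w$ in a neighborhood, using that each such common neighbor outside can be charged only boundedly often because of $C_5$-freeness, yielding $\sum_v e(G_v) \le \frac{1}{2\sqrt2}(1+o(1)) n^{3/2}$ after optimizing the degree threshold. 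The factor $\frac{1}{2\sqrt2}$ (versus Alon–Shikhelman's $\frac{\sqrt3}{2}$) should emerge from a tighter accounting that avoids double-counting triangles three times while losing a factor elsewhere — the point is to count each triangle once, at its highest-degree vertex, rather than via a symmetric sum.

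The main obstacle, I expect, is controlling the interaction between the neighborhoods of two different high-degree vertices — i.e., ensuring that the triangles are not being overcounted and that the local bounds $e(G_v) = O(d(v)^{?})$ actually sum correctly rather than each being tight simultaneously. Concretely, one needs a global convexity/Cauchy–Schwarz argument: something like $\sum_v e(G_v) \le \sum_v \binom{d(v)}{2} \cdot (\text{density of } G_v)$, where the densities cannot all be large because a dense $G_v$ forces many vertices to have high degree, which in turn (via $\sum d(v) = 2e(G) \le$ something, or via a $C_4$-type count on the whole graph) is self-limiting. Balancing the contribution of medium-degree vertices against this, and checking that the extremal configuration is exactly the doubled-$C_4$-free-graph construction from \eqref{eq:BGY} pushed to its limit (which gives $\tfrac{1}{3\sqrt3}$, so the upper bound $\tfrac{1}{2\sqrt2}$ is not yet tight — consistent with the phrasing ``improving'' rather than ``determining''), is where the delicate estimates live. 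I would organize the write-up as: (1) the $C_5$ $\Rightarrow$ structural lemma on common neighborhoods; (2) the per-vertex neighborhood bound; (3) the degree dichotomy and global summation with the optimized threshold.
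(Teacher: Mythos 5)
There is a genuine gap: as written, this is a plan rather than a proof, and the plan is missing the one ingredient that actually produces the constant $\frac{1}{2\sqrt 2}$. Every quantitative step is conditional (``I would establish\ldots'', ``I expect the right bound to come from\ldots'', ``should emerge from a tighter accounting''), and none of the proposed local lemmas is proved or even stated precisely enough to check. More importantly, the proposed route --- per-vertex bounds on $e(G_v)$, a degree threshold near $\sqrt n$, and a Cauchy--Schwarz/K\H{o}v\'ari--S\'os--Tur\'an double count --- has no identified mechanism for beating the constant $\frac12$ that such path-of-length-two counts naturally give for $C_4$-type conditions. The improvement from $\frac12$ to $\frac{1}{2\sqrt2}$ for graphs forbidding both $C_4$ and $C_5$ is itself a nontrivial compactness theorem of Erd\H{o}s and Simonovits, and your sketch neither invokes it nor reconstructs it; saying the constant ``should emerge'' from counting each triangle at its highest-degree vertex is not an argument. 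Your structural observations (two vertices of $N(u)\cap N(v)$ have no common neighbor outside $\{u,v\}$, etc.) are correct but are never converted into an inequality.

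For contrast, the paper's proof is a global structural reduction rather than a local count. After deleting edges in no triangle, it partitions the triangles into \emph{blocks} (classes generated by edge-sharing), proves that $C_5$-freeness forces every block to be either a crown-block (all triangles through one fixed edge) or a $K_4$-block, and shows that every $C_4$ of $G$ has all four edges inside a single block. One then selects, inside each block, one edge per triangle so that the selected edges contain no $C_4$; the resulting subgraph $G_0$ is $\{C_4,C_5\}$-free and has exactly as many edges as $G$ has triangles, so the Erd\H{o}s--Simonovits bound $\frac{1}{2\sqrt2}(1+o(1))n^{3/2}$ applies directly. If you want to salvage your approach, the honest assessment is that you would have to re-derive an Erd\H{o}s--Simonovits-strength estimate from scratch inside your degree dichotomy, which is a substantially harder task than the reduction above.
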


In an upcoming paper we prove more results following the approach introduced in this note and focus on improving our bound in Theorem \ref{Main_Result} further.

\vspace{3mm}

Our main idea is to select an appropriate subgraph such that the number of edges in the subgraph is the same as the number of triangles in the original graph and then we apply the following well-known theorem of Erd\H os and Simonovits \cite{Erd_Sim}. 

\begin{thm}(Erd\H os,  Simonovits \cite{Erd_Sim})
	\label{Erdos_Sim}
The maximum possible number of edges in a graph on $n$ vertices containing no $C_4$ or $C_5$ as a subgraph is at most $\frac{1}{2 \sqrt2} (1 + o(1)) n^{3/2}$.
\end{thm}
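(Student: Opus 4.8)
The plan is to convert the edge count into a statement about the degree sequence and then feed $C_5$-freeness in through an \emph{edge}-centered Moore-type ball count. Writing $d_v$ for the degree of $v$ and $m$ for the number of edges, Cauchy--Schwarz gives $\sum_v d_v^2 \ge (2m)^2/n$, so it suffices to prove the degree-squared estimate $\sum_v d_v^2 \le \tfrac12 n^2(1+o(1))$, which immediately yields $m \le \tfrac{1}{2\sqrt 2}(1+o(1))n^{3/2}$. Equivalently, since $\sum_v \binom{d_v}{2} = \tfrac12\sum_v d_v^2 - m$, the goal is to show that the number of pairs of vertices having a common neighbour is at most $\tfrac12\binom{n}{2}(1+o(1))$, i.e. that roughly half of all pairs fail to have a common neighbour. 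The extremal incidence-graph construction has exactly this feature (about half of its pairs lie at distance two), which is what pins down the constant $\tfrac{1}{2\sqrt 2}$.

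A purely vertex-centered count only reproduces the K\H{o}v\'ari--S\'os--Tur\'an constant. Indeed, $C_4$-freeness alone forces the $\sum_{u\sim v}(d_u-1)$ paths of length two leaving $v$ to end at distinct vertices, whence $1 + \sum_{u\sim v} d_u - 2t(v) \le n$, where $t(v)$ is the number of triangles through $v$; summing over $v$ and using that $C_4$-freeness makes triangles edge-disjoint (so $\sum_v t(v) = O(m)$) gives only $\sum_v d_v^2 \le n^2(1+o(1))$, the bound $\tfrac12 n^{3/2}$. The missing factor $\tfrac{1}{\sqrt 2}$ must come from $C_5$-freeness, and its natural home is an edge-centered ball. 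Fix an edge $xy$, put $S(v):=\sum_{u\sim v} d_u$, let $L_1 = (N(x)\cup N(y))\setminus\{x,y\}$, and let $L_2$ be the set of vertices at distance exactly two from $\{x,y\}$. The aim is to show that the radius-two ball $\{x,y\}\cup L_1\cup L_2$ has size at least $S(x)+S(y)$ up to lower-order terms, and hence $S(x)+S(y)\le n(1+o(1))$ for every edge $xy$.

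The distinctness required for this count splits into a $C_4$-part and a $C_5$-part. $C_4$-freeness gives $|N(x)\cap N(y)|\le 1$, makes each induced neighbourhood $G[N(x)]$ a matching, forbids edges between $N(x)\setminus\{y\}$ and $N(y)\setminus\{x\}$, and guarantees that no vertex of $L_2$ is reached from two distinct vertices lying on the \emph{same} side (two such vertices together with their common neighbour and $x$ would form a $C_4$). The decisive input is $C_5$-freeness: if some $w\in L_2$ were adjacent both to a neighbour $a$ of $x$ and to a neighbour $b$ of $y$, with the five vertices distinct, then $x\,a\,w\,b\,y\,x$ would be a $C_5$. Hence the second neighbourhoods branching off $x$ and off $y$ are disjoint, the outward edges from $L_1$ are in bijection with $L_2$ up to lower-order corrections, and $|L_2|\ge \sum_{a\in L_1}(d_a-1) - O(d_x+d_y) = S(x)+S(y) - O(d_x+d_y)$. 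This disjointness of the two sides is precisely the doubling responsible for the factor $\sqrt 2$.

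Finally I would pass from the per-edge inequality $S(x)+S(y)\le n(1+o(1))$ to the global estimate $\sum_v d_v^2 = \sum_v S(v) \le \tfrac12 n^2(1+o(1))$. In the regular case this is immediate, since $2S=n$ forces $\sum_v S(v)=nS\le n^2/2$; in general it needs an averaging step (a Chebyshev-type rearrangement, or first deleting the $o(n)$ vertices of degree $\gg \sqrt n$ to make the graph nearly regular) so that the saving from \emph{both} endpoints of each edge survives summation. I expect this averaging --- making the factor $2$ persist in the absence of near-regularity --- together with the bookkeeping of the lower-order corrections (edges inside $L_1$, the single possible common neighbour of $x$ and $y$, and triangles through $x$ or $y$, all controlled because $C_4$-freeness forces neighbourhoods to be matchings and triangles to be edge-disjoint, contributing $O(n^{3/2})$ in total) to be the main technical obstacle. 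The combinatorial heart, the $C_5$-driven disjointness of the two second-neighbourhoods, is by contrast the clean part of the argument.
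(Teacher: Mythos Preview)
The paper does not prove this theorem; it is quoted from Erd\H{o}s and Simonovits \cite{Erd_Sim} and used as a black box in the proof of Theorem~\ref{Main_Result}. So there is no in-paper argument to compare your proposal against.

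On its own merits: your combinatorial core is correct. For any edge $xy$ in a $\{C_4,C_5\}$-free graph, the second-neighbourhood fans branching off $N(x)\setminus\{y\}$ and $N(y)\setminus\{x\}$ are internally disjoint by $C_4$-freeness and mutually disjoint by $C_5$-freeness, giving $S(x)+S(y)\le n+O(d_x+d_y)$. This per-edge Moore inequality, and the identification of $C_5$-freeness as the source of the extra factor $\sqrt 2$, are exactly right.

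The gap is precisely where you place it, and your proposed patches do not close it. Summing the per-edge inequality over edges yields only $\sum_v d_vS(v)\le mn(1+o(1))$, not $\sum_v S(v)\le \tfrac12 n^2(1+o(1))$. The Chebyshev step would require $d_v$ and $S(v)$ to be similarly ordered, which nothing in the hypotheses forces: a low-degree vertex whose few neighbours all have high degree can have large $S$. Deleting the vertices of degree $\gg\sqrt n$ does not make the remaining graph $(1+o(1))$-regular; the standard Erd\H{o}s--Simonovits regularization lemma gives only $K$-almost-regularity for some absolute constant $K$, and plugging that in yields $2\delta^2\le S(x)+S(y)\le n(1+o(1))$, hence $\delta\le\sqrt{n/2}(1+o(1))$, but then $m\le n\Delta/2\le Kn\delta/2\le K\,n^{3/2}/(2\sqrt 2)$, and the factor $K$ survives. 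So the ``main technical obstacle'' you flag is a genuine obstacle, and the proposal as written does not deliver the sharp constant; the original argument handles this step with additional work that your sketch does not supply.
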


\section{Proof of Theorem \ref{Main_Result}}
Let $G$ be a $C_5$-free graph with maximum possible number of triangles. We may assume that each edge of $G$ is contained in a triangle, because otherwise, we can delete it without changing the number of triangles. Two triangles $T, T'$ are said to be in the same \emph{block} if they either share an edge or if there is a sequence of triangles $T, T_1, T_2, \ldots, T_s, T'$ where each triangle of this sequence shares an edge with the previous one (except the first one of course). It is easy to see that all the triangles in $G$ are partitioned uniquely into blocks. Notice that any two blocks of $G$ are edge-disjoint.  Below we will characterize the blocks of $G$.

A block of the form $\{abc_1, abc_2, \ldots, abc_k\}$ where $k \ge 1$, is called a \emph{crown-block} (i.e., a collection of triangles containing the same edge) and a block consisting of all triangles contained in the complete graph $K_4$ is called a \emph{$K_4$-block}. See Figure \ref{figure1}.

\begin{figure}[h]
	\begin{center}
		\includegraphics[scale=0.25]{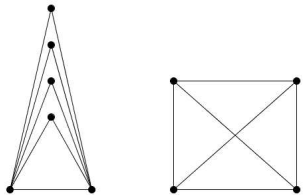}
	\end{center}
	\caption{An example of a crown-block and a $K_4$-block}
	\label{figure1}
\end{figure}

\begin{claim}
	\label{Blocks}
Every block of $G$ is either a crown-block or a $K_4$-block.
\end{claim}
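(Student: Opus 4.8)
The plan is to assume, for contradiction, that some block $B$ of $G$ is neither a crown-block nor a $K_4$-block, and to exhibit a $C_5$. Two easy cases can be cleared first. If $B$ consists of a single triangle, it is a crown-block with $k=1$. If all triangles of $B$ contain one common edge $ab$, then $B=\{abc_1,\dots,abc_k\}$ is again a crown-block. So from now on $B$ has at least two triangles and no single edge lies in all of them.

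The first substantial step is to find a ``branching'' triangle: I claim $B$ contains three distinct triangles $T_1,T_2,T_3$ such that $T_1,T_2$ share an edge $e$, $T_2,T_3$ share an edge $f$, and $e\neq f$. Suppose not. Then each triangle $T\in B$ is joined to the rest of $B$ (via edge-sharing) only through a single one of its three edges; calling $e$ the ``active'' edge of some fixed triangle $T_0$, every neighbour of $T_0$ contains $e$ and, being adjacent to $T_0$ only along $e$, has $e$ as its active edge too. Propagating this along the edge-sharing structure of $B$ (which is connected, and in which any two distinct triangles sharing an edge are adjacent) forces every triangle of $B$ to contain $e$, i.e.\ $B$ is a crown-block --- a contradiction. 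This step is, I expect, the main obstacle: it is elementary but needs the careful induction just sketched.

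With $T_1,T_2,T_3$ in hand, write $T_2=xyz$; since two distinct edges of a triangle meet in exactly one vertex, we may take $e=xy$ and $f=yz$, so $T_1=xyp$ and $T_3=yzq$ with $p,q\notin\{x,y,z\}$. Now the whole argument hinges on whether $p=q$. If $p\neq q$, the five vertices $p,x,z,q,y$ are distinct and $p\,x\,z\,q\,y\,p$ is a $C_5$ (edges $px,yp$ from $T_1$, edge $xz$ from $T_2$, edges $zq,qy$ from $T_3$), a contradiction. Hence $p=q$, and then the triangles $xyp,\,xyz,\,yzp$ witness that all six pairs in $\{x,y,z,p\}$ are edges; so $B$ contains the $K_4$ on $\{x,y,z,p\}$ and therefore all four of its triangles $xyz,xyp,xzp,yzp$.

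Finally I must show $B$ contains no further triangle. If it did, then by connectedness some $T_4\in B$ outside this $K_4$ shares an edge with one of its four triangles; that shared edge is one of the six edges of the $K_4$, so by the edge-transitivity of $K_4$ we may assume it is $xy$, giving $T_4=xyr$ with $r\notin\{x,y,z,p\}$. But then $z\,x\,r\,y\,p\,z$ is a $C_5$ (edges $zx,yp,pz$ from the $K_4$, edges $xr,ry$ from $T_4$), a contradiction. Hence $B$ is exactly the $K_4$-block on $\{x,y,z,p\}$, which completes the proof.
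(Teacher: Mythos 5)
Your proof is correct and follows essentially the same route as the paper: the key step in both is that two triangles attached to distinct edges of a third triangle either yield a $C_5$ (your cycle $p\,x\,z\,q\,y$ is the paper's $a\,c_2\,b\,x\,c_1$ up to relabelling) or close into a $K_4$, which then cannot be extended without creating another $C_5$; and absent such a configuration all triangles of the block share one edge. The only difference is organizational --- you isolate the ``branching triple'' as a separate step where the paper runs an induction adding triangles to the common edge $ab$ --- and your writeup has the minor virtue of exhibiting the second $C_5$ explicitly where the paper only asserts it is easy to find.
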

\begin{proof}
If a block contains only one or two triangles, then it is easy to see that it is a crown-block. So we may assume that a block of $G$ contains at least three triangles and let $abc_1, abc_2$ be some two triangles in it. We claim that if $bc_1x$ or $ac_1x$ is a triangle in $G$ which is different from $abc_1$, then $x = c_2$. Indeed, if $x \not = c_2$, then the vertices $a,x,c_1,b,c_2$ contain a $C_5$, a contradiction. Similarly, if $bc_2x$ or $ac_2x$ is a triangle in $G$ which is different from $abc_2$, then $x = c_1$. 

Therefore, if $ac_i$ or $bc_i$ (for $i = 1, 2$) is contained in two triangles, then $abc_1c_2$ forms a $K_4$. However, then there is no triangle in $G$ which shares an edge with this $K_4$ and is not contained in it because if there is such a triangle, then it is easy to find a $C_5$ in $G$, a contradiction. So in this case, the block is a $K_4$-block, and we are done. 

So we can assume that whenever $abc_1, abc_2$ are two triangles then the edges $ac_1, bc_1, ac_2, bc_2$ are each contained in exactly one triangle. Therefore, any other triangle which shares an edge with either $abc_1$ or $abc_2$ must contain $ab$. Let $abc_3$ be such a triangle. Then applying the same argument as before for the triangles $abc_1, abc_3$ one can conclude that the edges $ac_3, bc_3$ are contained in exactly one triangle and so, any other triangle of $G$ which shares an edge with one of the triangles $abc_1, abc_2, abc_3$ must contain $ab$ again. So by induction, it is easy to see that all of the triangles in this block must contain $ab$. Therefore, it is a crown-block, as needed.
\end{proof}

Recall that any two blocks of $G$ are edge-disjoint. We claim the following.

\begin{claim}
	\label{C_4_inside_block}
The edges of any $C_4$ in $G$ are contained in only one block of $G$.
\end{claim}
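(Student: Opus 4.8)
The plan is to prove directly that all four edges of a given $C_4$ lie in a single common block. Fix a $4$-cycle $v_1v_2v_3v_4v_1$ in $G$. Since every edge of $G$ is contained in a triangle, the edge $v_1v_2$ lies in some triangle $v_1v_2w$. The first step is to show $w \in \{v_3,v_4\}$: otherwise $w \notin \{v_1,v_2,v_3,v_4\}$, so $v_1,w,v_2,v_3,v_4$ are five pairwise distinct vertices and $v_1\, w\, v_2\, v_3\, v_4\, v_1$ is a cycle of length five in $G$ (its edges $v_1w$ and $wv_2$ come from the triangle, and $v_2v_3,\,v_3v_4,\,v_4v_1$ from the $C_4$), contradicting that $G$ is $C_5$-free.

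Next I would handle the two remaining possibilities. If $w=v_3$, then $v_1v_3\in E(G)$, so $v_1v_2v_3$ and $v_1v_3v_4$ are both triangles of $G$; they share the edge $v_1v_3$, hence lie in the same block $B$. The edges $v_1v_2$ and $v_2v_3$ belong to the triangle $v_1v_2v_3$ and the edges $v_3v_4$ and $v_4v_1$ belong to the triangle $v_1v_3v_4$, so all four edges of the $C_4$ lie in $B$. The case $w=v_4$ is entirely symmetric: $v_2v_4\in E(G)$, the triangles $v_1v_2v_4$ and $v_2v_3v_4$ share $v_2v_4$ and thus lie in one block $B$ containing all four edges of the $C_4$. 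Since any edge lying in triangles that belong to two blocks forces those blocks to coincide (blocks are pairwise edge-disjoint), ``all four edges lie in a common block'' is precisely the assertion that the edges of the $C_4$ are contained in only one block, which is the claim.

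This argument is short, so I do not expect a genuine obstacle; the one point that needs care is the first step, namely checking that the five vertices $v_1,w,v_2,v_3,v_4$ are genuinely distinct — using that $v_1v_2v_3v_4v_1$ is an honest $4$-cycle and that $w\ne v_1,v_2$ because $v_1v_2w$ is a triangle — so that the closed walk $v_1wv_2v_3v_4v_1$ really is a $C_5$ rather than a shorter cycle. The rest is routine bookkeeping with the block decomposition.
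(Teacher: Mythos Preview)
Your proof is correct and follows essentially the same approach as the paper: take a triangle on one edge of the $C_4$, force its third vertex to be a diagonal vertex of the $C_4$ (else a $C_5$ appears), and observe that the resulting diagonal splits the $C_4$ into two triangles sharing an edge, hence lying in one block. Your write-up is in fact a bit more careful about distinctness and about which two triangles arise than the paper's.
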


\begin{proof}
Let $xyzw$ be a $4$-cycle in $G$. Every edge of $G$ is contained in a triangle. So in particular, let $xyu$ be a triangle containing the edge $xy$. If $u \not \in \{x,y,z,w\}$ then $uxwzy$ is a $C_5$, a contradiction. Therefore, $u = z$ or $u = w$. So either $xyz$ and $yzw$ or $xyw$ and $ywz$ are triangles of $G$. In both cases, the two triangles share an edge, so they belong to the same block. Hence, all four edges of $xyzw$ lie in the same block.
\end{proof}

We are now ready to prove the theorem using the above claims. We want to select a $C_4$-free subgraph $G_0$ of $G$ such that the number of edges in $G_0$ is the same as the number of triangles in $G$. By Claim \ref{Blocks} the edge set of every $C_4$ is completely contained in some block of $G$. So in order to make sure the selected subgraph $G_0$  is $C_4$-free, it suffices to make sure the edges selected from each block of $G$ do not contain a $C_4$, which is done as follows: From each crown-block $\{abc_1, abc_2,\ldots, abc_k\}$, we select the edges $ac_1, ac_2,\ldots,ac_k$ to be in $G_0$. From each $K_4$-block $abcd$ we select the edges $ab, bc, ac, ad$ to be in $G_0$ (since every block is either a crown-block or a $K_4$-block by Claim \ref{Blocks}, we have dealt with all the blocks of $G$). Finally, notice that the number of selected edges in each block is exactly the number of triangles in that block. Moreover, since blocks are edge-disjoint, we never select the same edge twice. Therefore, as every triangle of $G$ is contained in some block, the total number of triangles in $G$ is the same as the number of edges in $G_0$. On the other hand, as $G_0$ is $C_4$-free and also $C_5$-free (as it is a subgraph of $G$), we can use Theorem \ref{Erdos_Sim}, to obtain that the number of edges in it is at most $\frac{1}{2 \sqrt2} (1 + o(1)) n^{3/2}$, completing the proof of Theorem \ref{Main_Result}.


\section*{Acknowledgements}
The research of the second and third authors is partially supported by the National Research, Development and Innovation Office  NKFIH, grant K116769.


\begin{thebibliography}{10}
		
		\bibitem{AlonS} Alon N. and C. Shikhelman, \emph{Many $T$ copies in $H$-free graphs.} Journal of Combinatorial Theory, Series B \textbf{121} (2016) 146-172.
		
		
		\bibitem{BolGy} Bollob\'as B. and E. Gy\H ori, \emph{Pentagons vs. triangles.} Discrete Mathematics \textbf{308.19} (2008) 4332-4336.
		
		\bibitem{Erd} Erd\H os P., \emph{On some problems in graph theory, combinatorial analysis and combinatorial number theory.}  B. Bollobas (Ed.), Graph Theory and Combinatorics (Cambridge, 1983), Academic Press, London (1984) 1-17.
		
		\bibitem{Erd_Sim} Erd\H os P. and M. Simonovits, \emph{Compactness results in extremal graph theory.} Combinatorica \textbf{2.3} (1982): 275-288.
		
	\end{thebibliography}
\end{document}